\title[Sharp bounds for the largest eigenvalue]{Sharp bounds for the largest eigenvalue of the normalized hypergraph Laplace Operator}
\date{}
\author{Raffaella Mulas}
\thanks{Max Planck Institute for Mathematics in the Sciences, Leipzig, Germany}
\email{raffaella.mulas@mis.mpg.de}
\theoremstyle{plain}
\newtheorem{theorem}{Theorem}[section]
\newtheorem{lemma}[theorem]{Lemma}
\newtheorem{corollary}[theorem]{Corollary}
\theoremstyle{definition}
\newtheorem{definition}[theorem]{Definition}
\theoremstyle{remark}
\newtheorem{remark}[theorem]{Remark}
\newtheorem{example}[theorem]{Example}
\tikzset{bluenode/.style={circle,fill=gray!50,minimum size=0.4cm,inner sep=0pt},}
\tikzset{rednode/.style={circle,fill=black!100,minimum size=0.4cm,inner sep=0pt},}
\begin{document}
	\maketitle
	\begin{abstract}We generalize the classical sharp bounds for the largest eigenvalue of the normalized Laplace operator, $\frac{N}{N-1}\leq \lambda_N\leq 2$, to the case of chemical hypergraphs.
	\end{abstract}
	\subsection*{Keywords}
Normalized Laplace operator, Spectral theory, Hypergraphs
	\section{Introduction}In \cite{Hypergraphs}, the author together with J\"urgen Jost introduced the notion of \emph{chemical hypergraph}, that is, a hypergraph with the additional structure that each vertex $v$ in a hyperedge $h$ is either an input, an output or both (in which case we say that $v$ is a \emph{catalyst} for $h$). They also defined, on such hypergraphs, a normalized Laplace operator that generalizes the one introduced by Chung for graphs \cite{Chung} and they investigated some properties of its spectrum. Furthermore, in a recent work \cite{Master-Stability}, the author together with Christian Kuehn and J\"urgen Jost proposed an application of this theory to the study of dynamical systems on hypergraphs.\newline
	Here we bring forward the study of the spectral properties of the hypergraph Laplacian. Particularly, we focus on the largest eigenvalue and we generalize the classical sharp bounds that are well known for graphs. As Chung showed in \cite{Chung}, given a connected graph $\Gamma$ on $N$ nodes, its largest eigenvalue $\lambda_N$ is such that
		\begin{equation}\label{eq:graphupper}
		\lambda_N\leq 2,
	\end{equation}
with equality if and only if $\Gamma$ is bipartite, and
		\begin{equation}\label{eq:graphlower}
		\lambda_N\geq \frac{N}{N-1},
	\end{equation}with equality if and only if $\Gamma$ is complete. Therefore, we can say that $2-\lambda_N$ estimates how different the graph is from being bipartite, while $\lambda_N- \frac{N}{N-1}$ quantifies how different it is from being complete. Here we generalize \eqref{eq:graphupper} and \eqref{eq:graphlower} to the case of hypergraphs.

	\subsection*{Structure of the paper}In Section \ref{section:basic} we recall some definitions from \cite{Hypergraphs} and we fix some new notation and terminology. In Section \ref{Section:Main} we state our main theorem and we prove it in Section \ref{section:proof}. Finally, in Section \ref{Section:Cheeger-like}, we discuss a corollary of our main theorem, namely, we can generalize the Cheeger-like constant $Q$ introduced in \cite{Cheeger-like-graphs} for the largest eigenvalue of graphs and prove that the lower bound $Q\leq \lambda_N$ still holds also for hypergraphs.

	\section{Basic definitions and assumptions}\label{section:basic}
	Before stating our main results, we recall some basic definitions from \cite{Hypergraphs} and we give a few new definitions that shall be useful for our discussion.
		\begin{definition}[\cite{Hypergraphs}]
				A \textbf{chemical hypergraph} is a pair $\Gamma=(V,H)$ such that $V$ is a finite set of vertices and $H$ is a set such that every element $h$ in $H$ is a pair of elements $(V_h,W_h)$ (input and output, not necessarily disjoint) in $\mathcal{P}(V)\setminus\{\emptyset\}$. The elements of $H$ are called the \textbf{oriented hyperedges}. Changing the orientation of a hyperedge $h$ means exchanging its input and output, leading to the pair $(W_h,V_h)$. 
			\end{definition}
			\begin{definition}[\cite{Hypergraphs}]
					A \textbf{catalyst} in a hyperedge $h$ is a vertex that is both an input and an output for $h$.
				\end{definition}
	
\begin{definition}[\cite{Hypergraphs}]
We say that a hypergraph $\Gamma=(V,H)$ is \textbf{connected} if, for every pair of vertices $v,w\in V$, there exists a path that connects $v$ and $w$, i.e. there exist $v_1,\dots,v_m\in V$ and $h_1,\dots,h_{m-1}\in H$ such that:
					\begin{itemize}
						\item $v_1=v$;
						\item $v_m=w$;
						\item $\{v_i,v_{i+1}\}\subseteq h_i$ for each $i=1,\dots,m-1$.
					\end{itemize}
				\end{definition}
We fix, from now on, a connected\footnote{As for the case of graphs, it is clear by the definition of the normalized Laplacian that the spectrum of a hypergraph is given by the union of the spectra of its connected components. Therefore, without loss of generality we can choose to work on connected hypergraphs.} (chemical) hypergraph $\Gamma=(V,H)$ on $N$ vertices and $M$ hyperedges. We define the \textbf{degree} of a vertex $v$ as
	\begin{equation*}
	    \deg v:=\bigl|\text{ hyperedges containing $v$ only as an input or only as an output }\bigr|
	\end{equation*}and we define the \textbf{cardinality} of a hyperedge $h$ as
	\begin{equation*}
	    |h|:=\bigl|\text{ vertices in $h$ that are either only an input or only an output }\bigr|.
	\end{equation*}Note that, in \cite{Hypergraphs}, the degree of a vertex is defined as the total number of hyperedges containing it (also as a catalyst). Here we consider this alternative definition of degree because it is more convenient in order to state our main results. Note that both definitions coincide with the usual notion of degree when we restrict to the graph case. \newline 
	
	We assume that $\deg v>0$ for each $v\in V$.
	\begin{definition}[\cite{Hypergraphs}]The \textbf{normalized Laplacian} associated to $\Gamma$ is the operator
	\begin{equation*}
	    L:\{f:V\rightarrow\mathbb{R}\}\rightarrow\{f:V\rightarrow\mathbb{R}\}
	\end{equation*}such that, given $f:V\rightarrow\mathbb{R}$ and given $v\in V$,
\begin{align*}
Lf(v):=&\frac{\sum_{h_{\text{in}}: v\text{ input}}\biggl(\sum_{v' \text{ input of }h_{\text{in}}}f(v')-\sum_{w' \text{ output of }h_{\text{in}}}f(w')\biggr)}{\deg v}+\\
&-\frac{\sum_{h_{\text{out}}: v\text{ output}}\biggl(\sum_{\hat{v} \text{ input of }h_{\text{out}}}f(\hat{v})-\sum_{\hat{w} \text{ output of }h_{\text{out}}}f(\hat{w})\biggr)}{\deg v}.
\end{align*}
	\end{definition}
We recall that $L$ has $N$ real, non-negative eigenvalues that we denote by
\begin{equation*}
    \lambda_1\leq \ldots\leq\lambda_N.
\end{equation*}These eigenvalues are invariant under changing the orientation of any hyperedge and, in particular, the largest eigenvalue on which we shall focus here can be written as
\begin{align}
	    \lambda_N&\label{eq:lambda_N}=\max_{f:V\rightarrow\mathbb{R}}\frac{\sum_{h\in H}\left(\sum_{v\text{ input of }h}f(v)-\sum_{j\text{ output of }h}f(w)\right)^2}{\sum_{i\in V}\deg(v)f(v)^2}
	    \\&\label{eq:lambda_N2}=\max_{\gamma:H\rightarrow\mathbb{R}}\frac{\sum_{v\in V}\frac{1}{\deg v}\cdot \biggl(\sum_{h_{\text{in}}: v\text{ input}}\gamma(h_{\text{in}})-\sum_{h_{\text{out}}: v\text{ output}}\gamma(h_{\text{out}})\biggr)^2}{\sum_{h\in H}\gamma(h)^2}.
	\end{align}
	The functions $f:V\rightarrow\mathbb{R}$ realizing \eqref{eq:lambda_N} are the eigenfunctions of $L$ for $\lambda_N$. We say that the functions $\gamma:H\rightarrow\mathbb{R}$ realizing \eqref{eq:lambda_N2} are the \textbf{hyperedge-eigenfunctions}. These are the eigenfunctions of the \emph{hyperedge-Laplacian}, an operator that has the same nonzero spectrum of $L$ and therefore the same largest eigenvalue. We refer the reader to \cite{Hypergraphs} for more details.

	\begin{remark}
	Because of the definition of degree that we are adopting and by definition of $L$, it is clear that, if a vertex $v\in V$ does not belong to a hyperedge $h\in H$, then the normalized Laplacian of $\Gamma$ coincides with the normalized Laplacian defined for $\Gamma'$, a hypergraph that is given by $\Gamma$ with the additional assumption that $v$ belongs to $h$ as a catalyst. Furthermore, since we are assuming that $\deg v>0$ for each $v\in V$, we are not considering vertices that are catalysts for all hyperedges in which they are contained (and such vertices would produce the eigenvalue $0$, as shown in \cite{Hypergraphs}). Therefore, without loss of generality we can focus on hypergraphs that do not have catalysts. We formalize this in the following lemma. 
	\end{remark}

	\begin{lemma}\label{lemma:catalysts}
	 Let $\Gamma=(V,H)$ be a chemical hypergraph such that there is no vertex that is a catalyst for all hyperedges in which it is contained. Let $\hat{\Gamma}:=(V,\hat{H})$, where
	 \begin{equation*}
	     \hat{H}:=\{(V_h\setminus W_h,\, W_h\setminus V_h): h=(V_h,W_h)\in H\}.
	 \end{equation*}Then, $\Gamma$ and $\hat{\Gamma}$ are isospectral.
	\end{lemma}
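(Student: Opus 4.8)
The plan is to show that $L_\Gamma$ and $L_{\hat\Gamma}$ are literally the same operator, from which isospectrality is immediate. By the variational description in \eqref{eq:lambda_N}, the operator $L$ is determined by exactly two ingredients: the quadratic form $Q(f)=\sum_{h\in H}\bigl(\sum_{v\text{ input of }h}f(v)-\sum_{w\text{ output of }h}f(w)\bigr)^2$ appearing in the numerator, and the weighted inner product $\langle f,g\rangle=\sum_{v\in V}\deg(v)\,f(v)g(v)$ encoded in the denominator, since $L$ is the self-adjoint operator with $\langle Lf,f\rangle=Q(f)$ relative to $\langle\cdot,\cdot\rangle$. I would therefore argue that both ingredients are unchanged under the passage from $H$ to $\hat H$, and then invoke the Courant--Fischer min--max formula, which recovers every $\lambda_k$ and not only $\lambda_N$, to conclude that the full spectra agree.

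First I would check that degrees are preserved, i.e. $\deg_\Gamma v=\deg_{\hat\Gamma}v$ for every $v\in V$. This is immediate from the chosen definition of degree: it counts only those hyperedges in which $v$ occurs purely as an input or purely as an output, and the operation $h=(V_h,W_h)\mapsto\hat h=(V_h\setminus W_h,\,W_h\setminus V_h)$ affects $v$ only when $v$ is a catalyst of $h$, in which case $v$ is simply deleted; a vertex that is a pure input or pure output of $h$ remains, respectively, a pure input or pure output of $\hat h$. Hence the denominator $\langle f,f\rangle$ is the same for $\Gamma$ and $\hat\Gamma$.

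Next I would show that the numerator is preserved hyperedge by hyperedge. Fixing $h=(V_h,W_h)$ and a catalyst $v\in V_h\cap W_h$, such a vertex contributes $+f(v)$ to the input sum and $-f(v)$ to the output sum, so it cancels in the signed sum $\sum_{v\text{ input of }h}f(v)-\sum_{w\text{ output of }h}f(w)$. Removing all catalysts of $h$ therefore leaves this signed sum unchanged and equal to the one computed for $\hat h$. Squaring and summing over $h\in H$ shows that $Q$ agrees for $\Gamma$ and $\hat\Gamma$. Combined with the previous step, the two Rayleigh quotients coincide identically as functions of $f$, so $L_\Gamma=L_{\hat\Gamma}$ and the claim follows.

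The only point requiring a little care is the bookkeeping for a hyperedge $h$ all of whose vertices are catalysts, i.e. $V_h=W_h$, for which $\hat h=(\emptyset,\emptyset)$ is not a legitimate oriented hyperedge. Such an $h$ contributes $0$ to $Q$ for every $f$ and, since none of its vertices is a pure input or output, contributes nothing to any degree either; I would simply discard it when forming $\hat H$, noting that this alters neither $Q$ nor $\langle\cdot,\cdot\rangle$. Here the hypothesis that no vertex is a catalyst for all hyperedges containing it is exactly what guarantees that, after these deletions, every vertex still lies in at least one hyperedge of $\hat\Gamma$ and retains the positive degree demanded by our standing assumptions. This is the mildest possible obstacle, as the substance of the argument is the catalyst cancellation in the previous paragraph.
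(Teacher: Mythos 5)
Your proof is correct and follows essentially the same route as the paper's: both arguments rest on the observations that catalysts do not count towards $\deg v$ and that a catalyst's contributions cancel in the signed sum over each hyperedge, so that the Laplacian of $\hat{\Gamma}$ is literally the same operator as that of $\Gamma$. The paper verifies this directly on the formula for $Lf(v)$ while you pass through the quadratic form and the weighted inner product, and you additionally (and correctly) handle the degenerate case $V_h=W_h$, which the paper leaves implicit.
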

	\begin{proof}Since the degree does not take into account the hyperedges for which a vertex is a catalyst, it is clear by definition of $L$ that $Lf(v)$ is invariant in $\Gamma$ and in $\hat{\Gamma}$, for all $v\in V$ and for all $f:V\rightarrow\mathbb{R}$. Therefore, in particular, the spectrum of $L$ coincides for these two hypergraphs.
 	\end{proof}
	
	In view of Lemma \ref{lemma:catalysts}, without loss of generality we can focus on \emph{oriented hypergraphs}, that is, chemical hypergraphs that do not include catalysts. Oriented hypergraphs have been introduced in \cite{ReffRusnak} by Reff and Rusnak, who also introduced the non-normalized Laplacian and the adjacency matrix for such hypergraphs. The spectral properties of these operators have been widely investigated, see for instance \cite{orientedhyp2013,orientedhyp2014,orientedhyp2016,orientedhyp2017,orientedhyp2018,orientedhyp2019,orientedhyp2019-2,orientedhyp2019-3}.\newline
	
	Throughout this paper we therefore work with a fixed connected oriented hypergraph (there are no catalysts) $\Gamma=(V,H)$ on $N$ nodes and $M$ hyperedges.

		\begin{definition}[\cite{Hypergraphs}]
	We say that a hypergraph $\Gamma$ is \textbf{bipartite} if one can decompose the vertex set as a disjoint union $V=V_1\sqcup V_2$ such that, for every hyperedge $h$ of $\Gamma$, either $h$ has all its inputs in $V_1$ and all its outputs in $V_2$, or vice versa (Figure \ref{fig:bipartiteh}).
	\end{definition}
					\begin{figure}[ht]
					\begin{center}
\begin{tikzpicture}
\node (v3) at (1,0) {};
\node (v2) at (1,1) {};
\node (v1) at (1,2) {};
\node (v6) at (5,0) {};
\node (v5) at (5,1) {};
\node (v4) at (5,2) {};

\begin{scope}[fill opacity=0.5]
\filldraw[fill=gray!70] ($(v1)+(0,0.5)$) 
to[out=180,in=180] ($(v2) + (0,-0.5)$) 
to[out=0,in=180] ($(v5) + (0,-0.5)$)
to[out=0,in=0] ($(v4) + (0,0.5)$)
to[out=180,in=0] ($(v1)+(0,0.5)$);
\filldraw[fill=white!70] ($(v2)+(0,0.5)$) 
to[out=180,in=180] ($(v3) + (0,-0.5)$) 
to[out=0,in=180] ($(v6) + (0,-0.5)$)
to[out=0,in=0] ($(v5) + (0,0.5)$)
to[out=180,in=0] ($(v2)+(0,0.5)$);
\end{scope}

\fill (v1) circle (0.05) node [right] {$v_1$} node [above] {\color{black}$+$};
\fill (v2) circle (0.05) node [right] {$v_2$} node [above] {\color{black}$+$} node [below] {\color{gray}$-$};
\fill (v3) circle (0.05) node [right] {$v_3$} node [below] {\color{gray}$-$};
\fill (v4) circle (0.05) node [left] {$v_4$} node [above] {\color{black}$-$};
\fill (v5) circle (0.05) node [left] {$v_5$} node [above] {\color{black}$-$}node [below] {\color{gray}$+$};
\fill (v6) circle (0.05) node [left] {$v_6$} node [below] {\color{gray}$+$};

\node at (0,2) {\color{black}$h_1$};
\node at (0,0) {\color{gray}$h_2$};
\end{tikzpicture}
					\end{center}
					\caption{A bipartite hypergraph with $V_1=\{v_1,v_2,v_3\}$ and $V_2=\{v_4,v_5,v_6\}$.}\label{fig:bipartiteh}
				\end{figure}
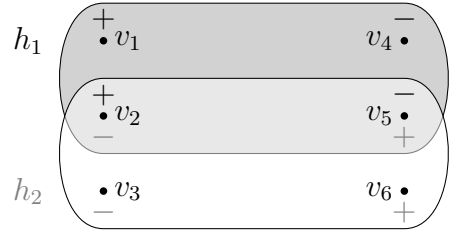
			
		\begin{definition}
	We say that a hypergraph $\hat{\Gamma}=(\hat{V},\hat{H})$ is a \textbf{sub-hypergraph} of $\Gamma=(V,H)$, denoted $\hat{\Gamma}\subset\Gamma$, if $\hat{V}\subseteq V$ and
	\begin{equation*}
	    \hat{H}=\{(V_{h}\cap \hat{V},W_{h}\cap \hat{V}):h=(V_h,W_h)\in H\}.
	\end{equation*}
	\end{definition}
	
	\begin{definition}
	Given a sub-hypergraph $\hat{\Gamma}\subset\Gamma$, we let \begin{equation*}
	     \eta(\hat{\Gamma}):=\frac{\sum_{v\in \hat{V}}\frac{\deg_{\hat{\Gamma}}(v)^2}{\deg v}}{|\hat{H}|},
	 \end{equation*}where $\deg_{\hat{\Gamma}}(v)$ denotes the degree of $v$ in $\hat{\Gamma}$ and $|\hat{H}|$ is the number of hyperedges in $\hat{\Gamma}$.
	\end{definition}We need the quantity $\eta(\hat{\Gamma})$ defined above for the statement of Theorem \ref{main-theo} below.
	
	\section{Main results}\label{Section:Main}
	We can now state our main theorem.
\begin{theorem}\label{main-theo}For every hypergraph $\Gamma$,
	 \begin{equation}\label{eq:upper}
	    \lambda_N\leq \max_{h\in H}|h|,
	 \end{equation}with equality if and only if $\Gamma$ is bipartite and $|h|$ is constant for all $h$, and
	 \begin{equation}\label{eq:lower}
	   \lambda_N\geq \max_{\hat{\Gamma}\subset\Gamma \text{ bipartite}}  \eta(\hat{\Gamma}).
	    \end{equation}
	 \end{theorem}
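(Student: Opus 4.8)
The plan for the upper bound \eqref{eq:upper} is to apply the Cauchy--Schwarz inequality hyperedge by hyperedge inside the Rayleigh quotient \eqref{eq:lambda_N}. I fix $f:V\to\R$ and, writing $\epsilon^h_x=+1$ or $-1$ according to whether the (catalyst-free) vertex $x$ is an input or an output of $h$, I bound each summand of the numerator by $\bigl(\sum_{x\in h}\epsilon^h_x f(x)\bigr)^2\le |h|\sum_{x\in h}f(x)^2$. Summing over $h$ and then replacing each $|h|$ by $S:=\max_{h\in H}|h|$ gives $\sum_{h}(\cdots)^2\le S\sum_{h}\sum_{x\in h}f(x)^2=S\sum_{v\in V}\deg(v)f(v)^2$, where the last equality holds because in an oriented hypergraph every vertex $v$ lies in exactly $\deg v$ hyperedges (there are no catalysts). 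Dividing through by the denominator yields $\lambda_N\le S$.

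For the equality statement I would treat the two directions separately. For the \emph{if} direction, assuming $\Gamma$ bipartite with $V=V_1\sqcup V_2$ and $|h|\equiv k$, the test function $f$ equal to $+1$ on $V_1$ and $-1$ on $V_2$ makes every summand of the numerator equal to $k^2$, so the Rayleigh quotient equals $Mk^2/\sum_{v}\deg v=Mk^2/(Mk)=k=S$, matching the bound. For the \emph{only if} direction I let $f$ realize $\lambda_N=S$, so that both inequalities above are equalities. The crucial step, which I expect to be the main obstacle of the whole theorem, is to show that $f$ never vanishes: if $f(v_0)=0$, then equality in Cauchy--Schwarz forces $f\equiv 0$ on every hyperedge through $v_0$ (otherwise $f|_h$ would be a nonzero multiple of $(\epsilon^h_x)_{x}$ and could not vanish at $v_0$), hence $f$ vanishes on all neighbours of $v_0$, and connectivity propagates this to $f\equiv 0$, contradicting that $f$ is an eigenfunction. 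Once $f$ is nowhere zero, equality in the second inequality forces $|h|=S$ for every $h$ (each weight $\sum_{x\in h}f(x)^2$ is strictly positive), and equality in Cauchy--Schwarz gives $f(x)=c_h\,\epsilon^h_x$ on each $h$; setting $V_1=\{f>0\}$ and $V_2=\{f<0\}$ then exhibits $\Gamma$ as bipartite.

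For the lower bound \eqref{eq:lower} I would argue instead through the hyperedge Rayleigh quotient \eqref{eq:lambda_N2}, which also computes $\lambda_N$. Fix a bipartite sub-hypergraph $\hat\Gamma\subset\Gamma$ with $\hat V=\hat V_1\sqcup\hat V_2$. Since both $\lambda_N$ and $\eta(\hat\Gamma)$ are invariant under reorienting hyperedges, I may assume that every hyperedge of $\hat\Gamma$ has its inputs in $\hat V_1$ and its outputs in $\hat V_2$, orienting the corresponding hyperedges of $\Gamma$ accordingly. I then choose, for each nonempty hyperedge of $\hat\Gamma$, one representative $h\in H$ restricting to it, and set $\gamma(h)=1$ on these representatives and $\gamma\equiv 0$ elsewhere, so that $\sum_{h}\gamma(h)^2=|\hat H|$.

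Evaluating the numerator of \eqref{eq:lambda_N2} for this $\gamma$, each $v\in\hat V_1$ is an input, and never an output, of exactly its $\deg_{\hat\Gamma}(v)$ representative hyperedges, so it contributes $\deg_{\hat\Gamma}(v)^2/\deg v$; by symmetry each $v\in\hat V_2$ contributes the same term, while every $v\notin\hat V$ contributes a nonnegative term. Discarding the latter gives numerator $\ge\sum_{v\in\hat V}\deg_{\hat\Gamma}(v)^2/\deg v$, whence $\lambda_N\ge\eta(\hat\Gamma)$, and maximizing over bipartite sub-hypergraphs yields \eqref{eq:lower}. I expect the only subtlety here to be the bookkeeping: correctly matching the number of representative hyperedges through $v$ to $\deg_{\hat\Gamma}(v)$ and the total count to $|\hat H|$, and checking that the stray contributions from vertices outside $\hat V$ can only help. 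The genuinely delicate point of the theorem remains the non-vanishing argument in the equality case of \eqref{eq:upper}.
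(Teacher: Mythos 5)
Your proposal is correct and follows essentially the same route as the paper: the per-hyperedge Cauchy--Schwarz bound $\bigl(\sum_{x\in h}\epsilon^h_x f(x)\bigr)^2\le |h|\sum_{x\in h}f(x)^2$ is exactly the paper's two-step estimate (pass to $|f|$, then expand the square), and the lower bound via the indicator-type test function in the hyperedge Rayleigh quotient \eqref{eq:lambda_N2} is the paper's Lemma \ref{lem:betterbound}. If anything, you are slightly more explicit than the paper on two points it treats tersely --- the non-vanishing of the optimal $f$ in the equality case (the paper just invokes ``$|f|$ constant by connectedness'') and the choice of one representative $h\in H$ per hyperedge of $\hat H$ so that the denominator is exactly $|\hat H|$ --- but the underlying argument is the same.
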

We prove Theorem \ref{main-theo} in Section \ref{section:proof}. Before, we discuss some consequences and examples.
	   \begin{corollary}
    For each hypergraph $\Gamma$,
        \begin{equation*}
            \lambda_N\leq N,
        \end{equation*}with equality if and only if $\Gamma$ is bipartite and each hyperedge contains all vertices.
    \end{corollary}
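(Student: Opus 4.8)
The plan is to deduce this corollary directly from the upper bound in Theorem \ref{main-theo}, so essentially no new analytic work is required. The key observation is that $|h|$ counts the (non-catalyst) vertices belonging to $h$, and since $\Gamma$ has exactly $N$ vertices in total we always have $|h|\leq N$. Hence $\max_{h\in H}|h|\leq N$, and combining this with the inequality $\lambda_N\leq\max_{h\in H}|h|$ from \eqref{eq:upper} immediately yields $\lambda_N\leq N$.

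For the equality characterization, I would chain together the two conditions that force equality. First, $\lambda_N=N$ together with the chain $\lambda_N\leq\max_{h\in H}|h|\leq N$ forces both inequalities to be equalities, that is, $\lambda_N=\max_{h\in H}|h|$ and $\max_{h\in H}|h|=N$. The first equality, by the equality case of \eqref{eq:upper}, is equivalent to $\Gamma$ being bipartite with $|h|$ constant across all hyperedges. The second equality says that this common constant value equals $N$, which means $|h|=N$ for every $h$, i.e. every hyperedge contains all $N$ vertices. Conversely, if $\Gamma$ is bipartite and each hyperedge contains all vertices, then $|h|=N$ is constant and $\max_{h\in H}|h|=N$, so Theorem \ref{main-theo} gives $\lambda_N=\max_{h\in H}|h|=N$.

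The only point requiring a little care---and the closest thing to an obstacle in an otherwise routine deduction---is correctly translating the two equality conditions into the single statement in the corollary: one must note that ``$|h|$ constant'' together with ``$\max_{h\in H}|h|=N$'' are jointly equivalent to ``every hyperedge has cardinality exactly $N$'', and that in our setting (oriented hypergraphs with no catalysts) a hyperedge of cardinality $N$ is precisely one containing all vertices.
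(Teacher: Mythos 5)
Your proof is correct and matches the paper's intended argument: the corollary is stated as an immediate consequence of Theorem \ref{main-theo}, obtained exactly as you do by combining $\lambda_N\leq\max_{h\in H}|h|\leq N$ with the equality characterization of \eqref{eq:upper}. Your remark that, in the catalyst-free setting, $|h|=N$ precisely when $h$ contains all vertices is the right observation to close the equality case.
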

    \begin{remark}
	         Observe that, in the graph case, $|h|=2$ for each edge. Hence, in this case, \eqref{eq:upper} tells us that
	         \begin{equation*}
	             \lambda_N\leq 2,
	         \end{equation*}with equality if and only if the graph is bipartite. \eqref{eq:upper} is therefore a generalization of the classical upper bound for $\lambda_N$, to the case of hypergraphs.\newline
	         Also, given a graph $\Gamma$, fix a vertex $v$ and let $\hat{\Gamma}$ be the bipartite sub-graph of $\Gamma$ given by the edges that have $v$ as endpoint. Then, by \eqref{eq:lower},
	         \begin{equation*}
	             \lambda_N\geq \eta(\hat{\Gamma})=1+\sum_{w\sim v}\frac{1}{\deg w\cdot \deg v}\geq 1+\sum_{w\sim v}\frac{1}{(N-1)\cdot \deg v}=1+\frac{1}{N-1}=\frac{N}{N-1}.
	         \end{equation*}Hence, from \eqref{eq:lower}, we can re-infer the fact that $\lambda_N\geq \frac{N}{N-1}$ for graphs.
	 \end{remark}
	 \begin{example}
	 Let $\Gamma=K_N$ be the complete graph on $N$ nodes. Fix a vertex $v$ and let $\hat{\Gamma}$ be the bipartite sub-graph of $\Gamma$ given by the edges that have $v$ as endpoint. Then,
	 \begin{equation*}
	     \eta(\hat{\Gamma})=\frac{N}{N-1}=\lambda_N.
	 \end{equation*}Therefore, \eqref{eq:lower} is an equality for $K_N$.
	 \end{example}
	 \begin{example}
	 Let $\Gamma=K_N\setminus\{(v_1,v_2)\}$ be the complete graph with an edge $(v_1,v_2)$ removed. We know, from \cite{extremal}, that $\lambda_N=\frac{N+1}{N-1}$. Let $\hat{\Gamma}$ be the bipartite sub-graph of $\Gamma$ given by the edges that have either $v_1$ or $v_2$ as endpoint. Then,
	 \begin{equation*}
	     \eta(\hat{\Gamma})=\frac{N+1}{N-1}=\lambda_N.
	 \end{equation*}Therefore, \eqref{eq:lower} is an equality also in this case.
	 \end{example}
	 \begin{example}
	   For a bipartite hypergraph $\Gamma$ such that $|h|=c$ is constant for each $h$, by Theorem \ref{main-theo} $\lambda_N=c$. Also,
     \begin{equation*}
         \eta(\Gamma)=\frac{\sum_{v\in V}\deg v}{M}=\frac{\sum_{h\in H}|h|}{M}=\frac{M\cdot c}{M}=c.
     \end{equation*}Therefore, \eqref{eq:lower} is an equality.
	 \end{example}

	 \section{Proof of the main results}\label{section:proof}
	 We split the proof of Theorem \ref{main-theo} into two steps: Lemma \ref{lem:betterbound} and Lemma \ref{lemma:upperbound} below.
	 
	 \begin{lemma}\label{lem:betterbound}For every hypergraph $\Gamma$,
	 \begin{equation*}
	     \max_{\hat{\Gamma}\subset\Gamma \text{ bipartite}} \frac{\sum_{v\in \hat{V}}\frac{\deg_{\hat{\Gamma}}(v)^2}{\deg v}}{|\hat{H}|}\leq \lambda_N.
	 \end{equation*}
	 \end{lemma}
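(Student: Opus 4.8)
The plan is to read off a lower bound for $\lambda_N$ from the variational formula \eqref{eq:lambda_N2}, which expresses $\lambda_N$ as a maximum of a Rayleigh quotient over all hyperedge-functions $\gamma:H\to\mathbb{R}$. For each bipartite sub-hypergraph $\hat\Gamma=(\hat V,\hat H)$ I would produce one explicit test function $\gamma$ whose Rayleigh quotient is at least $\eta(\hat\Gamma)$; since $\lambda_N$ dominates this quotient, maximizing over all bipartite $\hat\Gamma$ then yields the asserted inequality.

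Fix a bipartite sub-hypergraph with bipartition $\hat V=V_1\sqcup V_2$, so that every $\hat h\in\hat H$ is of one of two types: either all of its inputs lie in $V_1$ and all of its outputs in $V_2$ (type one), or vice versa (type two). For each $\hat h\in\hat H$ choose a single hyperedge $h\in H$ restricting to it, and let $S\subseteq H$ be this system of representatives. Define $\gamma(h)=+1$ if $h$ represents a type-one hyperedge, $\gamma(h)=-1$ if it represents a type-two hyperedge, and $\gamma(h)=0$ for $h\notin S$. With this choice the denominator of \eqref{eq:lambda_N2} is $\sum_{h\in H}\gamma(h)^2=|S|=|\hat H|$.

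The heart of the argument is the evaluation of the signed vertex sum $s(v):=\sum_{h_{\text{in}}:\,v\text{ input}}\gamma(h_{\text{in}})-\sum_{h_{\text{out}}:\,v\text{ output}}\gamma(h_{\text{out}})$, and this is exactly where bipartiteness is used. For $v\in V_1$, each representative containing $v$ contributes with sign $+1$: if it represents a type-one $\hat h$ then $v$ is an input and $\gamma=+1$, while if it represents a type-two $\hat h$ then $v$ is an output and the minus sign cancels $\gamma=-1$. Hence $s(v)=\deg_{\hat\Gamma}(v)$, and symmetrically $s(v)=-\deg_{\hat\Gamma}(v)$ for $v\in V_2$; in either case $s(v)^2=\deg_{\hat\Gamma}(v)^2$. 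For $v\in V\setminus\hat V$ the corresponding summand $\frac{1}{\deg v}s(v)^2$ is merely non-negative, so dropping it can only decrease the numerator. Collecting these facts gives
\[
\frac{\sum_{v\in V}\frac{1}{\deg v}\,s(v)^2}{\sum_{h\in H}\gamma(h)^2}\ \geq\ \frac{\sum_{v\in\hat V}\frac{\deg_{\hat\Gamma}(v)^2}{\deg v}}{|\hat H|}=\eta(\hat\Gamma),
\]
and the left-hand side is at most $\lambda_N$ by \eqref{eq:lambda_N2}; maximizing over bipartite $\hat\Gamma$ finishes the proof.

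I expect the difficulties to be bookkeeping rather than conceptual. The two points needing care are: first, that a representative $h$ of $\hat h$ has $v$ among its inputs (resp. outputs) precisely when $\hat h$ does, which holds because the input/output status and the degree of a vertex of $\hat V$ are determined by the restriction to $\hat V$; and second, the treatment of $\hat H$ as a set when several hyperedges of $H$ restrict to the same element, which is why passing to a system of representatives $S$ is convenient, as it makes the denominator exactly $|\hat H|$. The standing assumptions that $\Gamma$ has no catalysts (so that $\deg_{\hat\Gamma}(v)$ really counts the hyperedges through $v$) and that $\deg v>0$ ensure that every expression above is well defined.
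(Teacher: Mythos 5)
Your proposal is correct and follows essentially the same route as the paper: both plug an indicator-type test function supported on $\hat{H}$ into the Rayleigh-quotient characterization \eqref{eq:lambda_N2}, use bipartiteness to make all contributions at a vertex of $\hat{V}$ add up with the same sign, and discard the non-negative terms from $V\setminus\hat{V}$. The only (cosmetic) differences are that you encode the sign bookkeeping by assigning $\gamma=\pm1$ according to the type of each hyperedge where the paper instead reorients hyperedges and takes $\gamma'\equiv 1$ on $\hat{H}$ — these are equivalent since the spectrum is orientation-invariant — and that your explicit system of representatives $S$ handles more carefully than the paper the case where several hyperedges of $H$ restrict to the same element of $\hat{H}$.
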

	 \begin{proof}Given a bipartite sub-hypergraph $\hat{\Gamma}\subset\Gamma$, let $\gamma':H\rightarrow\mathbb{R}$ be $1$ on $\hat{H}$ and $0$ otherwise. Then, up to changing (without loss of generality) the orientations of the hyperedges,
		\begin{align*}
		\lambda_N&=\max_{\gamma:H\rightarrow\mathbb{R}}\frac{\sum_{v\in V}\frac{1}{\deg v}\cdot \biggl(\sum_{h_{\text{in}}: v\text{ input}}\gamma(h_{\text{in}})-\sum_{h_{\text{out}}: v\text{ output}}\gamma(h_{\text{out}})\biggr)^2}{\sum_{h\in H}\gamma(h)^2}\\
					&\geq \frac{\sum_{v\in V}\frac{1}{\deg v}\cdot \biggl(\sum_{h_{\text{in}}: v\text{ input}}\gamma'(h_{\text{in}})-\sum_{h_{\text{out}}: v\text{ output}}\gamma'(h_{\text{out}})\biggr)^2}{\sum_{h\in H}\gamma'(h)^2}\\
					&\geq \frac{\sum_{v\in \hat{V}}\frac{1}{\deg v}\cdot \biggl(\sum_{h_{\text{in}}: v\text{ input}}\gamma'(h_{\text{in}})-\sum_{h_{\text{out}}: v\text{ output}}\gamma'(h_{\text{out}})\biggr)^2}{\sum_{h\in H}\gamma'(h)^2}\\
					&= \frac{\sum_{v\in \hat{V}}\frac{\deg_{\hat{\Gamma}}(v)^2}{\deg v}}{|\hat{H}|}.					\end{align*}Since the above inequality is true for all $\hat{\Gamma}$, this proves the claim.
	 \end{proof}

\begin{lemma}\label{lemma:upperbound}
        For each hypergraph $\Gamma$,
        \begin{equation*}
	    \lambda_N\leq \max_{h\in H}|h|,
	 \end{equation*}with equality if and only if $\Gamma$ is bipartite and $|h|$ is constant for all $h$.
        \end{lemma}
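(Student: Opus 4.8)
The plan is to work directly with the variational characterization \eqref{eq:lambda_N} of $\lambda_N$ and to apply the Cauchy--Schwarz inequality hyperedge by hyperedge. For $f:V\to\R$ and a hyperedge $h$, write $f_h:=\sum_{v\text{ input of }h}f(v)-\sum_{w\text{ output of }h}f(w)$. Since $\Gamma$ has no catalysts, $f_h$ is a signed sum of exactly $|h|$ values $f(v)$, one for each vertex belonging to $h$, so Cauchy--Schwarz gives $f_h^2\leq |h|\sum_{v\in h}f(v)^2$, the inner sum running over the $|h|$ vertices of $h$. Summing over $h$ and reorganizing the double sum by vertices, using that each $v$ is counted once for every hyperedge that contains it as a genuine input or output, i.e. $\deg v$ times, I obtain
\[
\sum_{h\in H}f_h^2\ \leq\ \sum_{h\in H}|h|\sum_{v\in h}f(v)^2\ \leq\ \Bigl(\max_{h\in H}|h|\Bigr)\sum_{v\in V}\deg(v)\,f(v)^2 .
\]
Dividing by $\sum_{v\in V}\deg(v)f(v)^2>0$ and taking the maximum over $f$ yields $\lambda_N\leq\max_{h}|h|$ through \eqref{eq:lambda_N}.

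For the equality statement I would first check the easy (``if'') direction. If $\Gamma$ is bipartite with $V=V_1\sqcup V_2$ and $|h|=c$ for every $h$, I test the Rayleigh quotient on the function $f$ equal to $+1$ on $V_1$ and $-1$ on $V_2$. For each hyperedge all inputs lie on one side and all outputs on the other, so $f_h=\pm c$ and $f_h^2=c^2$; since $\sum_v\deg(v)f(v)^2=\sum_v\deg v=\sum_h|h|=Mc$, the quotient equals $Mc^2/(Mc)=c$. Combined with the upper bound, this forces $\lambda_N=c=\max_h|h|$.

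For the converse, suppose $\lambda_N=\max_h|h|=:c$ and let $f$ be an eigenfunction realizing the maximum in \eqref{eq:lambda_N}. Then both inequalities in the display above are equalities. Equality in the summed Cauchy--Schwarz step forces term-by-term equality, so for each hyperedge $h$ the numbers $f(v)$, $v\in h$, agree up to the input/output sign; in particular $|f|$ is constant on every hyperedge. Using connectivity of $\Gamma$, I propagate this along paths to conclude that $|f|\equiv a$ is globally constant, with $a>0$ because $f\neq0$; after rescaling, $f$ takes values in $\{\pm1\}$. Equality in the second step means $(c-|h|)\sum_{v\in h}f(v)^2=0$ for every $h$, and since $\sum_{v\in h}f(v)^2=|h|a^2>0$ this gives $|h|=c$ for all $h$, i.e. constant cardinality. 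Finally, setting $V_1:=\{v:f(v)=+1\}$ and $V_2:=\{v:f(v)=-1\}$, the per-hyperedge sign condition says that on each $h$ all inputs carry one common value and all outputs the opposite one, which is exactly the statement that all inputs of $h$ lie in one class and all outputs in the other; hence $\Gamma$ is bipartite.

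The step I expect to be most delicate is the passage from ``$|f|$ is constant on each hyperedge'' to ``$|f|$ is globally constant and nonzero'': this is where connectivity enters, and one must rule out the degenerate possibility that Cauchy--Schwarz equality is attained with $f\equiv0$ on some hyperedge. Once $|f|$ is known to be a nonzero constant, the two equality conditions separate cleanly into ``constant cardinality'' and ``bipartite'', and the remaining bookkeeping is routine.
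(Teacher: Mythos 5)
Your proof is correct and follows essentially the same route as the paper: a per-hyperedge Cauchy--Schwarz bound $f_h^2\leq|h|\sum_{v\in h}f(v)^2$, reorganization of the double sum by vertices via $\sum_{h\ni v}|h|\leq(\max_h|h|)\deg v$, and a tracking of the equality cases using connectivity. The only (harmless) differences are presentational: the paper splits your single Cauchy--Schwarz step into two stages (first replacing the signed sum by $\sum_{v\in h}|f(v)|$, which isolates the bipartiteness condition, then expanding the square with $2|f(v)||f(w)|\leq f(v)^2+f(w)^2$), and you add an explicit verification of the ``if'' direction with the $\pm1$ test function, which the paper leaves implicit in its chain of equality conditions.
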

    \begin{proof}Let $f:V\rightarrow\mathbb{R}$ be an eigenfunction for $\lambda_N$. Then,
    \begin{align*}
        \lambda_N&=\frac{\sum_{h\in H}\left(\sum_{v\text{ input of }h}f(v)-\sum_{j\text{ output of }h}f(w)\right)^2}{\sum_{i\in V}\deg(v)f(v)^2}\\
        &\leq \frac{\sum_{h\in H}\left(\sum_{v\in h}|f(v)|\right)^2}{\sum_{i\in V}\deg(v)f(v)^2},
    \end{align*}with equality if and only if $f$ has its nonzero values on a bipartite sub-hypergraph. Now, for each $h\in H$,
    \begin{align*}
       \left( \sum_{v\in h}|f(v)|\right)^2&= \sum_{v\in h}f(v)^2+\sum_{\{v,w\}:\,v\neq w \in h}2\cdot |f(v)|\cdot|f(w)|\\
       &\leq \sum_{v\in h}f(v)^2+\sum_{\{v,w\}:\,v\neq w \in h}\biggl(f(v)^2+f(w)^2 \biggr)\\
       &=\sum_{v\in h}f(v)^2 + \sum_{v\in h}(|h|-1)f(v)^2\\
       &=|h|\cdot \sum_{v\in h}f(v)^2,
    \end{align*}with equality if and only if $|f|$ is constant on all $v\in h$. Therefore,
    \begin{align*}
        \frac{\sum_{h\in H}\left(\sum_{v\in h}|f(v)|\right)^2}{\sum_{i\in V}\deg(v)f(v)^2}&\leq \frac{\sum_{h\in H}\sum_{v\in h}|h|\cdot \sum_{v\in h}f(v)^2}{\sum_{i\in V}\deg(v)f(v)^2}\\
        &=\frac{\sum_{v\in V}\sum_{h\ni v}|h|\cdot f(v)^2}{\sum_{i\in V}\deg(v)f(v)^2}\\
        &\leq \bigl(\max_{h\in H}|h|\bigr)\cdot \frac{\sum_{v\in V}\deg(v) f(v)^2}{\sum_{i\in V}\deg(v)f(v)^2}\\
        &=\max_{h\in H}|h|,
    \end{align*}where the first inequality is an equality if and only if $|f|$ is constant (since we assuming that $\Gamma$ is connected), and the last inequality is an equality if and only if $|h|$ is constant for all $h$. Putting everything together, we have that
    \begin{equation*}
        \lambda_N\leq \max_{h\in H}|h|,
    \end{equation*}with equality if and only if $|h|$ is constant for all $|h|$ while $|f|$ is constant and it's defined on a bipartite sub-hypergraph (that is, $|f|$ is constant and $\Gamma$ is bipartite).

    \end{proof}
   
     \section{Cheeger-like constant}\label{Section:Cheeger-like}
     As a consequence of Theorem \ref{main-theo}, we can also generalize the Cheeger-like constant introduced in \cite{Cheeger-like-graphs} for the case of graphs,
     \begin{equation}\label{eq:Qgraphs}
         Q:=\max_{e=(v,w)\in E}\Biggl(\frac{1}{\deg v}+\frac{1}{\deg w}\Biggr),
     \end{equation}where $E$ is the edge set of the graph, and we can prove that the lower bound $Q\leq \lambda_N$ still holds also for hypergraphs. Furthermore, we can also show that the characterization of $Q$ proved in \cite{Cheeger-like-graphs},
     \begin{equation}\label{eq:charQgraphs}		 Q=\max_{\gamma:E\rightarrow\mathbb{R}}\frac{\sum_{v\in V}\frac{1}{\deg v}\cdot \biggl|\sum_{e_{\text{in}}: v\text{ input}}\gamma(e_{\text{in}})-\sum_{e_{\text{out}}: v\text{ output}}\gamma(e_{\text{out}})\biggr|}{\sum_{e\in E}|\gamma(e)|} \end{equation}can be extended for hypergraphs as well. Note that \eqref{eq:charQgraphs} tells us that, for graphs, we can characterize $Q$ by looking at the characterization of $\lambda_N$ in \eqref{eq:lambda_N} and then replacing the $L_2$–norm by the $L_1$–norm both in the numerator and denominator. The reason why this is interesting is that something analogous happens to the classical graph Cheeger constant $h$. It is in fact well known that, for connected graphs, $h$ bounds the first non-zero eigenvalue\footnote{In the case of graphs, the multiplicity of $0$ for the normalized Laplacian equals the number of connected components of the graph. Therefore, for connected graphs, $\lambda_2$ is the first non-zero eigenvalue. The same doesn't hold for hypergraphs, see \cite{Hypergraphs}. This is why it is not yet clear how to generalize the Cheeger constant to chemical hypergraphs.} $\lambda_2$ both above and below and it can be characterized by first looking at a characterization of $\lambda_2$ using the \emph{Rayleigh quotient} and then replacing the $L_2$–norm by the $L_1$–norm both in the numerator and denominator \cite{Chung}. Furthermore, the first Cheeger-like constant for the largest graph eigenvalue that has been introduced is the \emph{dual Cheeger constant} $\bar{h}$ \cite{dual2,dual}. What makes the two Cheeger-like constants conceptually different is the fact that $\bar{h}$ is related to the Cheeger-constant $h$ \cite{dual2} and it doesn't have a characterization analogous to the one of $Q$, in terms of the Rayleigh quotient. \newline
     
     In particular, for hypergraphs, we generalize \eqref{eq:Qgraphs} by defining
   \begin{equation*}
 Q:=\max_{h\in H}\biggl(\sum_{v\in h}\frac{1}{\deg v}\biggr).
 \end{equation*}As a direct consequence of Theorem \ref{main-theo}, we can prove the following corollary.
	 \begin{corollary}For every hypergraph $\Gamma$,
	 \begin{equation*}
			     Q\leq \lambda_N.
			\end{equation*}
	 \end{corollary}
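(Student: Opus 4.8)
The plan is to read the Corollary off the lower estimate \eqref{eq:lower} of Theorem \ref{main-theo}, by exhibiting, for each hyperedge, a bipartite sub-hypergraph whose invariant $\eta$ reproduces exactly the corresponding term $\sum_{v\in h}1/\deg v$ appearing in the definition of $Q$. Since $Q$ is a maximum over hyperedges and \eqref{eq:lower} is a maximum over bipartite sub-hypergraphs, it suffices to match one term of $Q$ with one admissible $\hat\Gamma$.

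First I would fix a hyperedge $h^{\ast}=(V_{h^{\ast}},W_{h^{\ast}})\in H$ realizing the maximum in the definition of $Q$, and take $\hat\Gamma$ to be the sub-hypergraph carried by the single hyperedge $h^{\ast}$ together with its vertices. Because $\Gamma$ has no catalysts, $V_{h^{\ast}}$ and $W_{h^{\ast}}$ are disjoint, so setting $V_1:=V_{h^{\ast}}$ and $V_2:=W_{h^{\ast}}$ exhibits $\hat\Gamma$ as bipartite: its unique hyperedge has all of its inputs in $V_1$ and all of its outputs in $V_2$. This makes $\hat\Gamma$ an admissible competitor in the maximum in \eqref{eq:lower}.

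Next I would compute $\eta(\hat\Gamma)$. Here $|\hat H|=1$, and every vertex of $h^{\ast}$ lies in precisely this one hyperedge, so $\deg_{\hat\Gamma}(v)=1$ for each $v\in h^{\ast}$. Hence
\[
\eta(\hat\Gamma)=\frac{\sum_{v\in h^{\ast}}\frac{\deg_{\hat\Gamma}(v)^2}{\deg v}}{|\hat H|}=\sum_{v\in h^{\ast}}\frac{1}{\deg v}.
\]
Feeding this into \eqref{eq:lower} yields $\lambda_N\ge\eta(\hat\Gamma)=\sum_{v\in h^{\ast}}\frac{1}{\deg v}$, which is exactly $Q$ by the choice of $h^{\ast}$; thus $Q\le\lambda_N$.

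The argument is short and I do not expect a serious obstacle; the only point requiring care is the passage from a single hyperedge to a legitimate bipartite sub-hypergraph in the sense of the sub-hypergraph definition. If one prefers to bypass that formalism entirely, the same bound follows directly from the variational characterization \eqref{eq:lambda_N2} (equivalently, from the computation in the proof of Lemma \ref{lem:betterbound}) by testing against the function $\gamma$ that equals $1$ on $h^{\ast}$ and $0$ on every other hyperedge: the denominator of \eqref{eq:lambda_N2} is then $1$, each $v\in h^{\ast}$ contributes $\tfrac{1}{\deg v}(\pm 1)^2=\tfrac{1}{\deg v}$ to the numerator (the sign depending on whether $v$ is an input or an output of $h^{\ast}$), and every vertex outside $h^{\ast}$ contributes $0$, giving once more $\lambda_N\ge\sum_{v\in h^{\ast}}\frac{1}{\deg v}=Q$.
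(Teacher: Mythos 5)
Your proposal is correct and follows essentially the same route as the paper: both take the single-hyperedge bipartite sub-hypergraph, compute $\eta = \sum_{v\in h}\frac{1}{\deg v}$, and invoke the lower bound \eqref{eq:lower} of Theorem \ref{main-theo}. Your closing remark about testing $\gamma$ directly in \eqref{eq:lambda_N2} is a sensible safeguard, but it is not a departure from the paper's argument, which relies on the same computation via Lemma \ref{lem:betterbound}.
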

	 \begin{proof}For each $h\in H$, let $\hat{\Gamma}_h$ be the bipartite sub-hypergraph of $\Gamma$ given only by the hyperedge
	$h$. Then,
	 \begin{equation*}
	      \eta(\hat{\Gamma}_h)=\sum_{v\in h}\frac{1}{\deg v}
	 \end{equation*}
and, by Theorem \ref{main-theo},	 \begin{equation*}
	    Q=\max_{h\in H}\eta(\hat{\Gamma}_h)\leq \lambda_N.
	 \end{equation*}
	 \end{proof}We conclude by proving that also the characterization of $Q$ in \eqref{eq:charQgraphs} can be generalized to the case of hypergraphs. In particular, the proof of Lemma \ref{lemma:Qchar} below generalizes the proof of \cite[Lemma 4]{Cheeger-like-graphs}.
	 
	  \begin{lemma}\label{lemma:Qchar}For every hypergraph,
	 \begin{equation*}
	     Q=\max_{\gamma:H\rightarrow\mathbb{R}}\frac{\sum_{v\in V}\frac{1}{\deg v}\cdot \biggl|\sum_{h_{\text{in}}: v\text{ input}}\gamma(h_{\text{in}})-\sum_{h_{\text{out}}: v\text{ output}}\gamma(h_{\text{out}})\biggr|}{\sum_{h\in H}|\gamma(h)|}.
	 \end{equation*}
	 \end{lemma}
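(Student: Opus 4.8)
The plan is to prove the two inequalities $Q\le\mathrm{RHS}$ and $\mathrm{RHS}\le Q$ separately, where $\mathrm{RHS}$ denotes the maximum over $\gamma:H\to\mathbb{R}$ on the right-hand side of the claimed identity. This is the $L^1$ analogue of the Rayleigh-quotient characterization \eqref{eq:lambda_N2} of $\lambda_N$, and the proof mirrors that of \cite[Lemma 4]{Cheeger-like-graphs}.

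First I would establish $Q\le\mathrm{RHS}$ by exhibiting a test function. Let $h^\ast\in H$ attain $Q=\max_{h\in H}\sum_{v\in h}\frac{1}{\deg v}$, and let $\gamma^\ast:H\to\mathbb{R}$ be its indicator, so that $\gamma^\ast(h^\ast)=1$ and $\gamma^\ast(h)=0$ otherwise. Then the denominator is $\sum_{h\in H}|\gamma^\ast(h)|=1$. Since $\Gamma$ has no catalysts, each $v\in h^\ast$ is either only an input or only an output of $h^\ast$, so for such $v$ the inner expression $\sum_{h_{\text{in}}:v\text{ input}}\gamma^\ast(h_{\text{in}})-\sum_{h_{\text{out}}:v\text{ output}}\gamma^\ast(h_{\text{out}})$ equals $+1$ or $-1$, while it vanishes for $v\notin h^\ast$. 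Hence the numerator reduces to $\sum_{v\in h^\ast}\frac{1}{\deg v}=Q$, giving $\mathrm{RHS}\ge Q$.

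For the reverse inequality I would fix an arbitrary $\gamma:H\to\mathbb{R}$ and bound the numerator. By the triangle inequality, and again using that each vertex sits in a hyperedge only as an input or only as an output, contributing a single signed term $\pm\gamma(h)$,
\[
\Bigl|\sum_{h_{\text{in}}:v\text{ input}}\gamma(h_{\text{in}})-\sum_{h_{\text{out}}:v\text{ output}}\gamma(h_{\text{out}})\Bigr|\le\sum_{h\ni v}|\gamma(h)|.
\]
Weighting by $\frac{1}{\deg v}$ and interchanging the order of summation yields
\[
\sum_{v\in V}\frac{1}{\deg v}\sum_{h\ni v}|\gamma(h)|=\sum_{h\in H}|\gamma(h)|\sum_{v\in h}\frac{1}{\deg v}\le Q\sum_{h\in H}|\gamma(h)|,
\]
where the final step uses the definition of $Q$. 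Dividing by the denominator shows the ratio is at most $Q$ for every $\gamma$, so $\mathrm{RHS}\le Q$, and combining the two bounds gives the equality.

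I do not expect a serious obstacle; the only point requiring care is the bookkeeping forced by the catalyst-free assumption. It is precisely because a vertex contributes a single signed term $\pm\gamma(h)$ per incident hyperedge, rather than cancelling against itself, that the triangle inequality is both available for the upper bound and attained by the indicator $\gamma^\ast$ in the lower bound, so that the two estimates meet exactly at $Q$.
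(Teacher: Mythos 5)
Your proposal is correct and follows essentially the same route as the paper: the lower bound via the indicator function of a hyperedge attaining $Q$, and the upper bound via the triangle inequality followed by interchanging the order of summation over vertices and hyperedges. The only cosmetic difference is that the paper normalizes the maximizing $\gamma$ to have unit $L^1$-norm before running the same chain of inequalities, whereas you keep an arbitrary $\gamma$ and divide at the end.
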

	 \begin{proof}In order to prove that
	 \begin{equation*}
	     Q\leq \max_{\gamma:H\rightarrow\mathbb{R}}\frac{\sum_{v\in V}\frac{1}{\deg v}\cdot \biggl|\sum_{h_{\text{in}}: v\text{ input}}\gamma(h_{\text{in}})-\sum_{h_{\text{out}}: v\text{ output}}\gamma(h_{\text{out}})\biggr|}{\sum_{h\in H}|\gamma(h)|},
	 \end{equation*}fix a hyperedge $h'$ that maximizes $\sum_{v\in h}\frac{1}{\deg v}$ over all $h\in H$ and let $\gamma':H\rightarrow\mathbb{R}$ be $1$ on $h'$ and $0$ otherwise. Then, 
		\begin{align*}
		Q&=\sum_{v\in h'}\frac{1}{\deg v}\\
		&=\frac{\sum_{v\in V}\frac{1}{\deg v}\cdot \biggl|\sum_{h_{\text{in}}: v\text{ input}}\gamma'(h_{\text{in}})-\sum_{h_{\text{out}}: v\text{ output}}\gamma'(h_{\text{out}})\biggr|}{\sum_{h\in H}|\gamma'(h)|}\\
		&\leq \max_{\gamma:H\rightarrow\mathbb{R}}\frac{\sum_{v\in V}\frac{1}{\deg v}\cdot \biggl|\sum_{h_{\text{in}}: v\text{ input}}\gamma(h_{\text{in}})-\sum_{h_{\text{out}}: v\text{ output}}\gamma(h_{\text{out}})\biggr|}{\sum_{h\in H}|\gamma(h)|}.
		\end{align*}
		   
	 We now prove the inverse inequality. Let $\hat{\gamma}:H\rightarrow\mathbb{R}$ be a maximizer for
	 \begin{equation*}
	     \frac{\sum_{v\in V}\frac{1}{\deg v}\cdot \biggl|\sum_{h_{\text{in}}: v\text{ input}}\gamma(h_{\text{in}})-\sum_{h_{\text{out}}: v\text{ output}}\gamma(h_{\text{out}})\biggr|}{\sum_{h\in H}|\gamma(h)|}
	 \end{equation*}such that, without loss of generality, $\sum_{h\in H}|\hat{\gamma}(h)|=1$. Then,
	 \begin{align*}
	 Q&=\max_{h\in H}\biggl(\sum_{v\in h}\frac{1}{\deg v}\biggr)\\
	 &=\Biggl(\max_{h\in H}\biggl(\sum_{v\in h}\frac{1}{\deg v}\biggr)\Biggr)\cdot\biggl(\sum_{h\in H}|\hat{\gamma}(h)|\biggr)\\
	 &\geq \sum_{h\in H}|\hat{\gamma}(h)|\cdot\biggl(\sum_{v\in h }\frac{1}{\deg v}\biggr)\\
	 &=\sum_{v\in V}\frac{1}{\deg v}\cdot \biggl(\sum_{h\ni v}|\hat{\gamma}(h)|\biggr)\\
	 &\geq \sum_{v\in V}\frac{1}{\deg v}\cdot \biggl|\sum_{h_{\text{in}}: v\text{ input}}\hat{\gamma}(h_{\text{in}})-\sum_{h_{\text{out}}: v\text{ output}}\hat{\gamma}(h_{\text{out}})\biggr|\\
	&=\max_{\gamma:H\rightarrow\mathbb{R}}\frac{\sum_{v\in V}\frac{1}{\deg v}\cdot \biggl|\sum_{h_{\text{in}}: v\text{ input}}\gamma(h_{\text{in}})-\sum_{h_{\text{out}}: v\text{ output}}\gamma(h_{\text{out}})\biggr|}{\sum_{h\in H}|\gamma(h)|}.
					\end{align*}This proves the claim.
	 \end{proof}
    \subsection*{Acknowledgments}The author is grateful to Aida Abiad and J\"urgen Jost for the helpful comments, and to Alessandro Marfoni for giving her a roof during the COVID-19 outbreak. The results presented in this paper have been proved under that roof.
		\bibliographystyle{unsrt}
	\bibliography{SharpBounds15.05.20}	

\begin{thebibliography}{10}

\bibitem{Hypergraphs}
J.~Jost and R.~Mulas.
\newblock {Hypergraph Laplace operators for chemical reaction networks}.
\newblock {\em Advances in Mathematics}, 351:870--896, 2019.

\bibitem{Chung}
F.~Chung.
\newblock Spectral graph theory.
\newblock {\em American Mathematical Society}, 1997.

\bibitem{Master-Stability}
R.~Mulas, C.~Kuehn, and J.~Jost.
\newblock Coupled dynamics on hypergraphs: Master stability of steady states
  and synchronization.
\newblock arXiv:2003.13775, 2020.

\bibitem{Cheeger-like-graphs}
J.~Jost and R.~Mulas.
\newblock {Cheeger-like inequalities for the largest eigenvalue of the graph
  Laplace Operator}.
\newblock arXiv:1910.12233, 2019.

\bibitem{ReffRusnak}
N.~Reff and L.~Rusnak.
\newblock {An oriented hypergraphic approach to algebraic graph theory}.
\newblock {\em Linear Algebra and its Applications}, 437:2262--2270, 2012.

\bibitem{orientedhyp2013}
L.~Rusnak.
\newblock Oriented hypergraphs: Introduction and balance.
\newblock {\em The Electronic Journal of Combinatorics}, 20, 2013.

\bibitem{orientedhyp2014}
N.~Reff.
\newblock {Spectral properties of oriented hypergraphs}.
\newblock {\em Electronic Journal of Linear Algebra}, 27, 2014.

\bibitem{orientedhyp2016}
N.~Reff.
\newblock Intersection graphs of oriented hypergraphs and their matrices.
\newblock {\em Australasian Journal of Combinatorics}, 65(1):108–--123, 2016.

\bibitem{orientedhyp2017}
E.~Robinson, L.~Rusnak, M.~Schmidt, and P.~Shroff.
\newblock {Oriented hypergraphic matrix-tree type theorems and bidirected
  minors via Boolean order ideals}.
\newblock {\em Journal of Algebraic Combinatorics}, 2017.

\bibitem{orientedhyp2018}
G.~Chen, V.~Liu, E.~Robinson, L.~J. Rusnak, and K.~Wang.
\newblock {A characterization of oriented hypergraphic Laplacian and adjacency
  matrix coefficients}.
\newblock {\em Linear Algebra and its Applications}, 556:323--341, 2018.

\bibitem{orientedhyp2019}
L.~Duttweiler and N.~Reff.
\newblock Spectra of cycle and path families of oriented hypergraphs.
\newblock {\em Linear Algebra and its Applications}, 578:251--271, 2019.

\bibitem{orientedhyp2019-2}
O.~Kitouni and N.~Reff.
\newblock {Lower bounds for the Laplacian spectral radius of an oriented
  hypergraph}.
\newblock {\em Australasian Journal of Combinatorics}, 74(3):408--–422, 2019.

\bibitem{orientedhyp2019-3}
W.~Grilliette, J.~Reynes, and L.~J. Rusnak.
\newblock Incidence hypergraphs: Injectivity, uniformity, and matrix-tree
  theorems.
\newblock arXiv:1910.02305, 2019.

\bibitem{extremal}
K.~Das and Sun. S.
\newblock Extremal graph on normalized laplacian spectral radius and energy.
\newblock {\em The Electronic Journal of Linear Algebra}, 29(1):237--253, 2016.

\bibitem{dual2}
F.~Bauer and J.~Jost.
\newblock {Bipartite and neighborhood graphs and the spectrum of the normalized
  graph Laplacian}.
\newblock {\em Communications in Analysis and Geometry}, 21:787--845, 2013.

\bibitem{dual}
F.~Bauer, B.~Hua, and J.~Jost.
\newblock {The dual Cheeger constant and spectra of infinite graphs}.
\newblock {\em Advances in Mathematics}, 251:147--194, 2014.

\end{thebibliography}
	\end{document}